\documentclass{amsart}
\usepackage[utf8]{inputenc}
\usepackage{amsmath}
\usepackage{hyperref}
\usepackage{dynkin-diagrams}
\usepackage[shortlabels]{enumitem}
\usepackage{color}

\usepackage{multirow}

\newtheorem{theorem}{Theorem}[section]
\newtheorem{lemma}[theorem]{Lemma}

\theoremstyle{definition}

\theoremstyle{remark}
\newtheorem{remark}[theorem]{Remark}

\numberwithin{equation}{section}




\begin{document}

\title[An Exceptional Combinatorial Sequence]{An Exceptional Combinatorial Sequence and Standard Model Particles}

\author{Benjamin Nasmith}

\date{\today}


\keywords{Root systems, line systems, combinatorics, particle physics}

\begin{abstract}
    Three-graded root systems can be arranged into nested sequences. One exceptional sequence provides a natural means to recover some structures and symmetries familiar in the context of particle physics.
\end{abstract}

\maketitle

\section{Introduction}

We can succinctly describe many features of both Lie and Jordan structures in algebra and geometry using {root systems}. The following sequence of root systems has a number of exceptional properties:
\begin{align}
    \label{mainsequence}
    E_7 \rightarrow
    E_6 \rightarrow
    D_5 \rightarrow
    A_4 \rightarrow
    A_1 \times A_2. \tag{$\star$}
\end{align}
The final root system and nesting in this sequence, $A_4 \rightarrow A_1\times A_2$, corresponds to the Lie group of the standard model of particle physics: $\mathrm{U(1)}\times\mathrm{SU}(2)\times\mathrm{SU}(3)$. 
The third and fourth root systems correspond to two well-studied grand unification theories: the $\mathrm{Spin}(10)$ and $\mathrm{SU}(5)$ theories. 
This note describes some special properties of this sequence of root systems and explains how it affords a natural representation of all three generations of standard model fermions. 

\section{Star-Closed Line Systems}

Consider the three axes of a regular hexagon in $\mathbb{R}^2$. These lines have the special property that the angle between any two of the three is $60$ degrees. That is, the three axes of a regular hexagon are a system of \textbf{equiangular lines}. It turns out that for any system of equiangular lines in $\mathbb{R}^d$, the number of lines $n$ must satisfy $n \le \binom{d+1}{2}$. 
The number $\binom{d+1}{2}$ is called the \textbf{absolute bound} on the number of equiangular lines in $d$-dimensions \cite[chap. 11]{godsil_algebraic_2001}. The three axes of the hexagon meet this absolute bound in $d=2$ dimensions. The only other known examples of equiangular lines at the absolute bound consist of the axes of an icosahedron in $d=3$, a $28$ line system for $d=7$, and a $276$ line system for $d=23$. Any further examples, if they exist, will occur in $d \ge 119$ \cite[1402]{bannai_survey_2009}.

In what follows we will refer to three lines at $60$ degrees as a \textbf{star}. The star is the smallest system of equiangular lines at the absolute bound, and stars are responsible for an abundance of rich structures in algebra and combinatorics. Examples of structures that can be constructed from stars include root systems, root lattices, Lie algebras, Jordan grids, Jordan triple systems, Jordan algebras, and many interesting spherical and projective $t$-designs. 

We will focus for the moment on line systems of type $(0,1/2)$. A \textbf{line system of type} $(a_1, a_2, \ldots, a_n)$ is a finite set of lines through the origin of a real vector space (equivalently, points in a real projective space) such that the Euclidean inner product of any two unit vectors spanning distinct lines satisfies $|\cos\theta| \in \{a_1, a_2, \ldots, a_n\}$. Line systems of type $(0,1/2)$ are studied in \cite{cameron_line_1976}, while line systems of types $(0,1/3)$ and $(0,1/2,1/4)$ are studied in \cite{shult_near_1980}. In what follows we will refer to line systems of type $(0,1/2)$ simply as line systems. That is, we will take a \textbf{line system} to be a set of lines in a real vector space such that any two lines in the system are either orthogonal or at $60$ degrees. 

Each pair of non-orthogonal lines in a line system defines a unique coplanar line that is at $60$ degrees to both members of the pair. Three lines at $60$ degrees form a \textbf{star}, and any two members of a star defines the third member. Using this concept, we can compute the \textbf{star-closure} of a line system by adding to the line system any missing third lines defined by any nonorthogonal pair. When a line system is equal to its own star-closure, it is a \textbf{star-closed line system}. When a line system cannot be partitioned into two mutually orthogonal subsets, it is an \textbf{indecomposable line system}. Finally, a \textbf{star-free line system} is a line system without stars, in which any three mutually non-orthogonal lines span a vector space of dimension three.

The indecomposable star-closed line systems are classified in \cite{cameron_line_1976}. The classification makes heavy use of the following lemma:
\begin{lemma}
    Let $L$ be a line system and let $S \subset L$ be a star. Then each line in $L \setminus S$ is orthogonal to either $1$ or $3$ members of $S$.
\end{lemma}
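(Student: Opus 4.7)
The plan is to reduce the claim to a finite arithmetic check by choosing good representative unit vectors for the star. First I would pick unit vectors $e_1, e_2, e_3$ spanning the three lines of $S$ with signs chosen so that the pairwise inner products are all $-1/2$; this is possible because the lines are pairwise at $60$ degrees. A short computation then gives $\|e_1+e_2+e_3\|^2 = 3 + 6(-1/2) = 0$, so we get the key relation $e_1 + e_2 + e_3 = 0$.

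Next I would take any line $\ell \in L \setminus S$ with unit vector $v$, and set $a_i := v \cdot e_i$. Because $L$ is a line system of type $(0, 1/2)$, each $a_i$ lies in $\{0, \pm 1/2\}$. Applying $v \cdot (-)$ to the relation above yields the constraint $a_1 + a_2 + a_3 = 0$. The statement to prove is now purely about how many of the $a_i$ can equal zero under these two constraints.

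Finally I would enumerate cases by the number $k$ of indices with $a_i = 0$. The case $k=0$ would require three numbers from $\{\pm 1/2\}$ summing to zero, which is impossible since any such sum is a nonzero half-integer. The case $k=2$ would force the remaining $a_i$ to be zero as well, contradicting $k=2$. The cases $k=1$ and $k=3$ are both realizable, and they correspond exactly to $v$ being orthogonal to one or to all three members of $S$, giving the two alternatives in the lemma.

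I do not expect a genuine obstacle here; the only subtlety is the sign choice that produces the identity $e_1+e_2+e_3=0$, which must be mentioned explicitly because lines (not vectors) are the primary objects, and the inner products $v \cdot e_i$ depend on the chosen representatives even though the conclusion about orthogonality does not.
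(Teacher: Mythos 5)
The paper itself does not prove this lemma; it is imported from \cite{cameron_line_1976} and used as a black box, so there is no in-text argument to compare yours against. Your proof is the standard one and is essentially correct: normalize representatives of the star so that $e_1+e_2+e_3=0$, pair this relation against a unit vector $v$ on any other line to get $a_1+a_2+a_3=0$ with each $a_i\in\{0,\pm\tfrac{1}{2}\}$, and observe that the sum cannot vanish if exactly zero or exactly two of the $a_i$ are zero.

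The one step whose justification is inadequate as written is the normalization itself. Being pairwise at $60$ degrees is \emph{not} sufficient to choose signs making all three inner products equal to $-\tfrac{1}{2}$: the product $(e_1\cdot e_2)(e_1\cdot e_3)(e_2\cdot e_3)$ is invariant under replacing any $e_i$ by $-e_i$ (the sign change contributes $(\epsilon_1\epsilon_2\epsilon_3)^2=1$), and there exist non-coplanar triples of pairwise-$60$-degree lines in $\mathbb{R}^3$ for which this product is $+\tfrac{1}{8}$; for such a triple no choice of signs gives all inner products $-\tfrac{1}{2}$, and the representatives never sum to zero. What you actually need is the defining property of a star used in the paper --- the three lines are coplanar, any two determining the third --- which forces the invariant to equal $-\tfrac{1}{8}$ and makes your normalization, and hence the identity $e_1+e_2+e_3=0$, available. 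With that one sentence corrected, the case analysis goes through and the argument is complete.
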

That is, for line system $L$ containing star $S$, we can partition the lines of $L$ into $S$, lines orthogonal to $S$, and three sets of lines orthogonal to just one member of $S$. We may call this partition the \textbf{star-decomposition} of line system $L$ with respect to star $S \subset L$. 
That is, for $S = \{a,b,c\}$ we can write $L = S~\dot{\cup}~A~\dot{\cup}~B~\dot{\cup}~C~\dot{\cup}~D$, where $A$ is the set of lines in $L$ orthogonal to just $a$, $B$ orthogonal to just $b$, $C$ orthogonal to just $c$, and $D$ orthogonal to all three lines of $S$. 
We will see below that the physics concepts of particle \textit{colour} and \textit{generation} can be recovered from the combinatorial concept of line system star-decomposition. 

When $L$ is an indecomposable star-closed line system, we can say a number of helpful things about subsets of lines in the star-decomposition of $L$, as developed in \cite[chap. 12]{godsil_algebraic_2001}. First, $L$ is the star-closure of $S~\dot{\cup}~A$. Second, the set $A$ does not contain any stars and we can find a set of vectors spanning $A$ with all non-negative inner products. Third, any pair of orthogonal lines in $A$ belongs to a set of three mutually orthogonal lines in $A$, called a \textbf{triad}. Fourth, the triads in $A$ always form the ``lines'' of a generalized quadrangle. So the task of classifying indecomposable star-closed line systems is equivalent to the task of classifying the (possibly trivial) generalized quadrangle structures with ``lines'' of size $3$ on the set $A$ of the star-decomposition of that system. 

A \textbf{generalized quadrangle} is a point-line incidence structure such that the bipartite incidence graph has diameter 4 and girth 8. 
We denote by $GQ(s,t)$ a generlized quadrangle in which each ``line'' contains $s+1$ ``points'' and each ``point'' belongs to $t+1$ ``lines''. 
In terms of $A$, the ``points'' are the lines of $A$ and the ``lines'' are the orthogonal triads of $A$. 
We will see below that the lines corresponding to a single generation of particles define a generalized quadrangle $GQ(2,2)$ with automorphism group $S_6$ (the only symmetric group with a non-trivial outer automorphism).

We will say that the lines of $A$ \textbf{represent} graph $G$ if we can find a vector on each line of $A$ such that the Gram matrix of these vectors, apart from the diagonal entries, is the adjacency matrix of $G$. In the case of star-free $A$, the graph $G$ has the lines of $A$ for vertices and two vertices adjacent if and only if they are non-orthogonal lines. The vertices of this graph and the maximal independent sets must form the ``points'' and ``lines'' of a generalized quadrangle, albeit a possibly trivial one. 
This restriction on the possible structure of $A$ yields the classification of indecomposable star-closed line systems. For more details on the following theorem, see \cite[chap. 12]{godsil_algebraic_2001}.
\begin{theorem}
    \cite{cameron_line_1976} Every indecomposable star-closed line system is the star-closure of a system of lines $S~\dot{\cup}~A$, where $S$ is a star and $A$ is a star-free set of lines orthogonal to just one line in $S$, and where  $A$ represents graph $G$ with maximal independent sets forming a generalized quadrangle:
    \begin{enumerate}[(a)]
        \item $\overline{A_n}$ for $G$ the complete graph $K_{n-2}$,
        \item $\overline{D_n}$ for $G$ the cocktail party graph $CP(n-3)$ plus an isolated vertex,
        \item $\overline{E_6}$ for $G$ the unique strongly regular graph with parameters $(9,4,1,2)$,
        \item $\overline{E_7}$ for $G$ the unique strongly regular graph with parameters $(15,8,4,4)$,
        \item $\overline{E_8}$ for $G$ the unique strongly regular graph with parameters $(27,16,10,8)$.
    \end{enumerate}
    \label{starclosedclassification}
\end{theorem}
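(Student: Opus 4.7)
The plan is to reduce the classification of indecomposable star-closed line systems to the classification of possible structures on the set $A$ in the star-decomposition, and then to the classification of generalized quadrangles with ``lines'' of size $3$. All of the preliminary facts needed are already collected in the paragraphs preceding the theorem, so the proof is largely an exercise in assembling them.

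First, I would fix an indecomposable star-closed line system $L$, choose any star $S=\{a,b,c\}\subset L$, and apply the star-decomposition lemma to write $L = S \,\dot\cup\, A \,\dot\cup\, B \,\dot\cup\, C \,\dot\cup\, D$. Using the facts stated in the paper, I would record that (i) $L$ is the star-closure of $S\,\dot\cup\,A$, (ii) $A$ is star-free, (iii) orthogonal pairs in $A$ extend to orthogonal triads, and (iv) one can choose spanning unit vectors on the lines of $A$ so that all pairwise inner products are in $\{0,+1/2\}$. The existence of such a ``positive'' representative for each line is the point where indecomposability is used, via a standard switching argument.

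Next, I would define the graph $G$ on the line set $A$ by making two lines adjacent precisely when they are non-orthogonal. By (iv), the Gram matrix of the positive representatives coincides with $I + \tfrac12 M(G)$, where $M(G)$ is the adjacency matrix of $G$, so $A$ represents $G$ in the sense defined in the paper. Because $A$ is star-free, a clique in $G$ has size at most $2$; hence the maximal independent sets in $G$ correspond to maximal sets of mutually orthogonal lines in $A$, which by (iii) are precisely the orthogonal triads. Thus the ``points'' of $A$ with the triads as ``lines'' form a (possibly degenerate) generalized quadrangle $GQ(2,t)$.

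The main obstacle is then the classification step: determine all graphs $G$ whose vertex set and maximal independent sets form a $GQ(2,t)$. This is where the heavy combinatorial input is imported from \cite{cameron_line_1976} and the exposition in \cite[chap.~12]{godsil_algebraic_2001}; one uses the bounds on parameters of generalized quadrangles with line size $3$ (so $t\in\{0,1,2,4\}$ together with the degenerate ``empty line'' case), rules out most parameter sets by elementary eigenvalue or counting arguments, and invokes uniqueness of the three exceptional strongly regular graphs listed in (c)--(e). The degenerate cases give the infinite families (a) and (b): when $G$ has no triads at all, it must be a complete graph $K_{n-2}$, producing $\overline{A_n}$; when the triads pairwise intersect in the same point, $G$ is a cocktail party graph with an isolated vertex, producing $\overline{D_n}$.

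Finally, for each of the five graphs I would verify that the star-closure of $S\,\dot\cup\,A$ actually yields the claimed line system, by exhibiting a concrete representation of $A$ inside the corresponding root system and checking that the star-closure operation recovers all the lines. The hard part is really the classification of $GQ(2,t)$ structures on $A$; everything else is bookkeeping using the star-decomposition lemma.
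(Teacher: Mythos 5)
The paper does not actually prove this theorem: it is imported verbatim from \cite{cameron_line_1976}, with the supporting facts summarized from \cite[chap.~12]{godsil_algebraic_2001} in the paragraphs you drew on. Your outline follows exactly the route that exposition sketches --- star-decomposition, reduction to the graph $G$ represented by $A$, identification of the orthogonal triads as the ``lines'' of a generalized quadrangle $GQ(2,t)$, and then the classification of the admissible $G$ --- and it defers the same hard step (determining which graphs carry such a $GQ(2,t)$ structure, with $t \in \{1,2,4\}$ plus the degenerate cases giving $A_n$ and $D_n$) to the same sources. So there is no methodological divergence to report.

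One genuine error in your write-up: the claim that ``because $A$ is star-free, a clique in $G$ has size at most $2$'' is false. Star-freeness means that any three mutually non-orthogonal lines of $A$ span a three-dimensional space rather than lying in a common plane as a star; it does not forbid triangles in $G$. Indeed the graph $\mathrm{srg}(27,16,10,8)$ arising for $\overline{E_8}$ has $\lambda = 10$, so adjacent vertices have ten common neighbours and $G$ is full of cliques of size $3$ and larger. Fortunately the conclusion you wanted --- that the maximal independent sets of $G$ are exactly the maximal families of mutually orthogonal lines, hence (by the triad-extension property) the orthogonal triads --- follows directly from the definition of adjacency in $G$ and does not need the clique bound, so the slip is local and the rest of the argument stands.
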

Here we denote by $\overline{\Phi}$ a star-closed line system and by $\Phi$ the set of length $\sqrt{2}$ vectors spanning the lines of $\overline{\Phi}$. 
As the labels above suggest, the star-closed line systems are precisely the lines spanned by more familiar the \textbf{simply-laced root systems}, the root systems with all equal-length roots. Note that the standard terminology is such that an \textit{indecomposable} line system $\overline{\Phi}$ corresponds to an \textit{irreducible} root system $\Phi$.

\begin{remark}
    Not all irreducible root systems are simply-laced. That is, there are irreducible root systems of types $B_n$, $C_n$, $G_2$, and $F_4$ that include roots of two different lengths. 
    We can recover these systems from line systems via the root lattices of the corresponding simply-laced root systems. Put another way, every root lattice is also the root lattice of a simply-laced root system \cite[99]{conway_sphere_2013}. 
    
    First, suppose that we have an $\overline{A_1^n}$ star-closed system of lines. 
    This is simply a set of $n$ mutually orthogonal lines in $\mathbb{R}^n$.
    Take the vectors of length $\sqrt{2}$ spanning these lines. 
    These vectors span the root lattice of type $\mathbb{Z}^n$. 
    The second layer of that lattice (the lattice points at the second shortest distance to the origin with the roots forming the first layer) is a root system of type $D_n$, containing roots of length $2$. 
    The sum of these $A_1^n$ roots of length $\sqrt{2}$ and $D_n$ roots of length $2$ is a $B_n$ root system.
    
    Second, suppose that we have a $\overline{D}_n$ star-closed system of lines. 
    Take the vectors of length $\sqrt{2}$ spanning these lines to obtain a $D_n$ root system spanning a $D_n$ root lattice. 
    There exists in the second layer of the $D_n$ lattice a subset of vectors that both spans an $\overline{A_1^n}$ set of lines and identifies additional reflection symmetries of the underlying $D_n$ system. 
    If we include these vectors, we obtain a $C_n$ root system. 
    
    Finally, we obtain the $G_2$ roots by taking the first two layers of the lattice defined by $\overline{A}_2$, and the $F_4$ roots by taking the first two layers of the lattice defined by $\overline{D}_4$.
\end{remark}

\section{Nested Sequences of Binary Decompositions}

We have seen that any indecomposable star-closed line system admits a star decomposition. 
Apart from $\overline{E}_8$, it turns out that every indecomposable star-closed line system also admits at least one \textbf{binary decomposition}, namely a partition $\overline{\Phi} = \overline{\Phi}_0 ~\dot{\cup}~\overline{\Phi}_1$ such that $\overline{\Phi}_0$ is \textit{star-closed}, $\overline{\Phi}_1$ is \textit{star-free}, and $\overline{\Phi}$ is the star-closure of the star-free component $\overline{\Phi}_1$.
We can characterize binary decompositions in terms of 3-gradings of simply-laced root systems, since each star-closed line system corresponds to a simply-laced root system. 
Following \cite[168]{loos_locally_2004}, we define a \textbf{3-grading on a root system} $\Phi$ as a partition,
\begin{align*}
     \Phi = \Phi_{-1}~\dot{\cup}~\Phi_{0}~\dot{\cup}~\Phi_{1},
\end{align*}
such that,
\begin{align*}
    \Phi \cap (\Phi_{a} + \Phi_{b} ) \subset \Phi_{a+b}, \quad \Phi_c = \varnothing \text{ for } c \ne -1,0,1,
\end{align*}
and also,
\begin{align*}
    \Phi \cap (\Phi_1 - \Phi_1) = \Phi_0.
\end{align*}
That is, if the difference between any two roots in $\Phi_1$ is also a root, then it is a root in $\Phi_0$. Also, every root in $\Phi_0$ is the difference of some two roots in $\Phi_1$.  
Since every $3$-grading corresponds to a homomorphism from the corresponding root lattice to the grading group, we have $\Phi_{-1} = - \Phi_{1}$. This means that we can recover the entire root system from the $\Phi_{1}$ piece alone, as linear combinations of roots in $\Phi_{1}$. 
In particular, the 3-grading defined by $\Phi_1$ defines a star-free set of lines $\overline{\Phi}_1$, spanned by the roots of $\Phi_1$. 
Just as we can recover $\Phi$ from $\Phi_1$ by familiar Weyl reflections, so also can we recover $\overline{\Phi}$ from $\overline{\Phi}_1$ by star-closure. 


The \textbf{coweight} of a root system $\Phi$ is a vector $q$ such that for each root $\alpha$ in $\Phi$, the \textbf{Euclidean inner product} $\langle \alpha, q\rangle$ is an integer. 
In general, a $\mathbb{Z}$-grading on a root system $\Phi$ can be identified with some coweight $q$ as follows \cite[166]{loos_locally_2004}:
\begin{align*}
    \Phi_i = \Phi_i(q) = \left\{\alpha \in \Phi \mid \langle \alpha, q \rangle = i \in \mathbb{Z}\right\}.
\end{align*}
The coweights responsible for $3$-gradings are the \textbf{minuscule coweights} \cite[61]{loos_locally_2004}. 
That is, a minuscule coweight of $\Phi$ is a vector $q$ such that $\langle \alpha,q\rangle = -1, 0 ,1$ for all roots $\alpha$. 
These facts can be used to show that the possible $3$-gradings on connected root systems are classified using the weighted Coxeter-Dynkin diagrams shown in Fig. \ref{3gradings}. In each case, we obtain the $3$-grading of a irreducible root system $\Phi$ by identifying the $\Phi_0$ component as the root subsystem with its Coxeter-Dynkin diagram given by the dark vertices \cite[171]{loos_locally_2004}.
\begin{figure}[!h]
    \centering
    \begin{tabular}{l l l}
        \hline
        \hline
        $3$-Grading Name & Coxeter-Dynkin Diagram & $\Phi \xrightarrow{|\Phi_1|} \Phi_0$
        \\
        \hline
        rectangular & $\dynkin{A}{**.o.*}$
        & $A_{p+q-1} \xrightarrow{p q} A_{p-1}\times A_{q-1}$
        \\
        hermitian & $\dynkin{C}{**.*o}$ 
        & $C_n \xrightarrow{\binom{n+1}{2}} A_{n-1}$
        \\
        odd quadratic & $\dynkin{B}{o*.**}$ 
        & 
        $B_n \xrightarrow{2n-1} B_{n-1}$
        \\
        even quadratic & $\dynkin{D}{o*.****}$     
        &
        $D_n \xrightarrow{2(n-1)} D_{n-1}$  
        \\
        alternating & $\dynkin{D}{**.***o}$ 
        &
        $D_n \xrightarrow{\binom{n}{2}} A_{n-1}$
        \\
        Albert & $\dynkin{E}{******o}$ 
        &
        $E_7 \xrightarrow{27} E_6$
        \\
        bi-Cayley & $\dynkin{E}{o*****}$ 
        &
        $E_6 \xrightarrow{16} D_5$
        \\
        \hline
        \hline
\end{tabular}
    \caption{The $3$-gradings on finite irreducible root systems.}
    \label{3gradings}
\end{figure}

We see from Fig. \ref{3gradings} that root systems of types $B_n$, $C_n$, $E_6$, and $E_7$ only admit one possible type of $3$-grading. Root systems of types $A_n$ and  $D_n$ admit multiple possible $3$-gradings. In the case of $A_n$ root systems, there are $\lfloor (n+1)/2 \rfloor$ possible rectangular $3$-gradings. In the case of $D_n$ root systems, there is a quadratic $3$-grading and an alternating $3$-grading. Root systems of types $E_8$, $G_2$, and $F_4$ do not admit a 3-grading. In each case, we need only identify the $\Phi_0$ component to identify the $3$-grading.

We define a \textbf{sequence of nested} 3\textbf{-gradings} as a sequence of root systems $\Phi^{(n)} \subset \Phi^{(n+1)}$ such that $\Phi^{(n)} = \Phi_0^{(n+1)}$. We denote such a sequence using a diagram of the form,
\begin{align*}
    \cdots \rightarrow \Phi^{(n+1)} \xrightarrow{|\Phi_1^{(n+1)}|}  \Phi^{(n)} \xrightarrow{|\Phi_1^{(n)}|} \Phi^{(n-1)} \rightarrow \cdots
\end{align*}
The weight of the arrow is the dimension of the $1$-part of the $3$-grading it represents. Fig. \ref{GradingMesh} illustrates the structure of sequences of nested $3$-gradings for the simply-laced root systems of rank $7$ or less. 
Multiple sequences can pass through a single root system. For instance, from Fig. \ref{GradingMesh} we see that both $D_n \rightarrow D_{n-1} \rightarrow A_{n-2}$ and $D_n \rightarrow A_{n-1} \rightarrow A_{n-2}$ represent possible nestings of $3$-gradings containing both $D_n$ and $A_{n-2}$. The diagram could be extended to the upper-right by including higher rank root systems, adding the arrows $D_8 \xrightarrow{14} D_7$, $A_8 \xrightarrow{8} A_7$, $D_8 \xrightarrow{28} A_7$, and so on. 

\begin{figure}[!ht]
    \centering
    \includegraphics{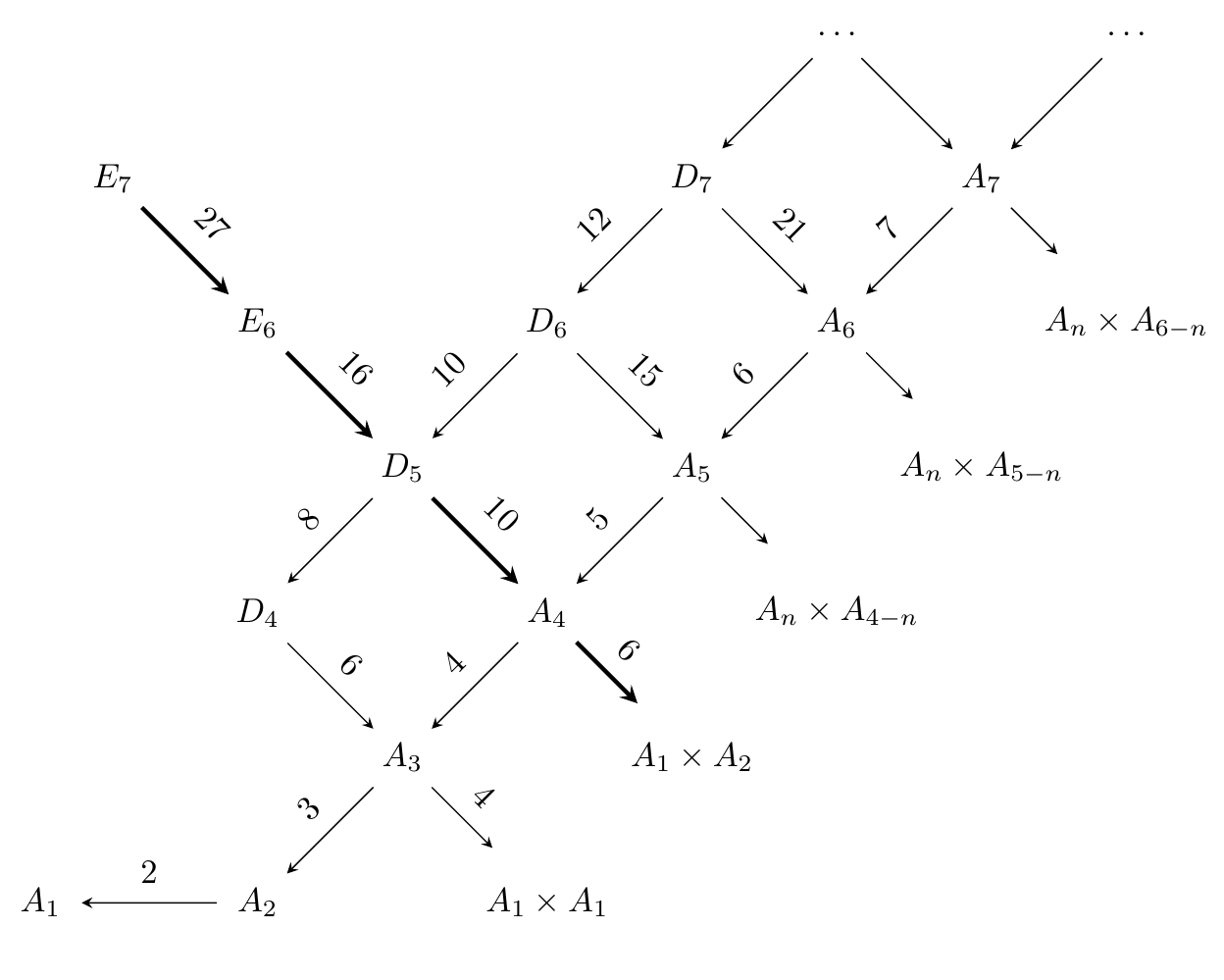}
    \caption{Nested $3$-gradings of simply-laced irreducible root systems in $\mathbb{R}^7$.}
    \label{GradingMesh}
\end{figure}

Sequences of nested $3$-gradings on root systems correspond to sequences of nested binary decompositions on line systems, and vice versa. The mesh of available $3$-gradings on irreducible root systems shown in Fig. \ref{GradingMesh} also applies to indecomposable star-closed line systems to describe the available binary decompositions. By working with line systems, we can better appreciate that the combinatorial properties of $\Phi_1$ and $\Phi_{-1}$ are equivalent. Indeed, the lines defined by $\Phi_1$ are precisely the same lines defined by $\Phi_{-1}$.

The exceptional sequence \eqref{mainsequence}, bolded in Fig. \ref{GradingMesh}, is one example of a sequence of nested 3-gradings of simply-laced root systems, or equivalently a sequence nested binary decompositions of star-closed line systems. 

\section{The Exceptional Sequence}

We now identify some special properties of sequence \eqref{mainsequence}, in comparison to all other possible  sequences of nested binary decompositions, as illustrated in Fig. \ref{GradingMesh}. 

First, sequence \eqref{mainsequence} begins with $E_7$, which is the only indecomposable star-closed line system (or irreducible simply-laced root system) that admits a binary decomposition but is not embedded in another line system as the zero-component of a binary decomposition. 
That is, any sequence of nested binary decompositions can be extended further to the left unless it begins with $E_7$. 
So sequences that begin with $E_7$ and end in either $A_1$, $A_1\times A_1$, or $A_1 \times A_2$ are unique in that they cannot be made any longer by extended to the left or the right.

Second, sequence \eqref{mainsequence} is a local sequence in the following sense. 
For a binary decomposition $\overline{\Phi} = \overline{\Phi}_1~\dot{\cup}~\overline{\Phi}_0$, we can define a \textbf{binary decomposition graph} $G$ with the lines of $\overline{\Phi}_1$ for vertices and all pairs of nonorthogonal lines for edges. 
Using this definition, we can assign a graph to each binary decomposition, or arrow, in a nested sequence.
The graph of a binary decomposition for $\overline{\Phi}$ indecomposable is always vertex-transitive. This means that there is a unique \textbf{local subgraph} of $G$, the induced subgraph on the neighbours of any given point.  
We will say that a sequence of nested binary decompositions is a \textbf{local sequence} when the binary decomposition graph of each arrow is isomorphic to the local subgraph of the binary decomposition graph in the preceding arrow.
The possible local sequences beginning with indecomposable star-closed line systems are as follows:
\begin{align*}
    \cdots &\rightarrow A_n \rightarrow A_{n-1} \rightarrow \cdots \rightarrow A_2 \rightarrow A_1, \\
    \cdots &\rightarrow D_n \rightarrow D_{n-1} \rightarrow \cdots \rightarrow D_4 \rightarrow A_3 \rightarrow A_1 \times A_1, \\
    D_n &\rightarrow A_{n-1} \rightarrow A_1 \times A_{n-3}, \\
    E_7 &\rightarrow
    E_6 \rightarrow
    D_5 \rightarrow
    A_4 \rightarrow
    A_1 \times A_2.
\end{align*}
If we restrict ourselves to local sequences that cannot be embedded in a longer sequence, then the exceptional sequence \eqref{mainsequence} is the only one with this property, since it is the only local sequence that begins with $E_7$. 

Third, sequence \eqref{mainsequence} is a maximal sequence in the following sense. 
We say that a sequence of nested binary decompositions is a \textbf{maximal sequence} when the path of the sequence through the possible binary decompositions, shown in Fig. \ref{GradingMesh}, is such that the largest $\overline{\Phi}_1$ component is chosen in each case. That is, a maximal sequence always follows the highest weight arrows from a given starting point in Fig. \ref{GradingMesh}.

\begin{theorem}
    The sequence \eqref{mainsequence} is the unique local and maximal sequence of nested 3-gradings (or binary decompositions) that cannot be embedded in a longer sequence.     
\end{theorem}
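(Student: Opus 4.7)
The plan is to combine the three stipulated properties---locality, maximality, and unextendability---with the classification and observations already established in this section to force a unique sequence. First I would invoke the opening remark of the present section: $E_7$ is the only indecomposable star-closed line system that admits a binary decomposition but cannot itself be embedded as the zero-component of a binary decomposition of a larger system. Consequently, any sequence of nested binary decompositions that cannot be extended to the left must begin with $E_7$.

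Next, I would consult the enumeration of local sequences stated just before the theorem. The $A_n$ and $D_n$ chains can be extended leftward indefinitely by prepending $A_{n+1}$ or $D_{n+1}$ via the rectangular and even quadratic gradings, and the sequence $D_n \to A_{n-1} \to A_1 \times A_{n-3}$ can be extended to the left through the arrow $D_{n+1} \to D_n$. Hence the only local sequence that begins with $E_7$---and therefore the only local, left-unextendable sequence---is the exceptional sequence~\eqref{mainsequence}. Its terminal root system $A_1 \times A_2$ appears on the noted list of three root systems at which no further nesting is possible, so the exceptional sequence is right-unextendable as well.

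Finally, I would verify maximality by walking through each arrow using Fig.~\ref{3gradings}. The root systems $E_7$ and $E_6$ each admit only a single 3-grading, so the first two arrows are trivially maximal. At $D_5$ the even quadratic grading has $|\Phi_1| = 8$ (to $D_4$) while the alternating grading has $|\Phi_1| = 10$ (to $A_4$), so maximality selects $A_4$. At $A_4$ the rectangular 3-gradings have $|\Phi_1| = pq$ with $p+q = 5$, yielding $|\Phi_1| \in \{4,6\}$, and maximality selects the $(p,q) = (2,3)$ grading to $A_1 \times A_2$. This confirms that the exceptional sequence is local, maximal, and unextendable, and the preceding paragraph shows it is unique with these properties.

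The main obstacle is bookkeeping rather than computation: one must be careful to check that left-unextendability combined with locality already isolates the exceptional sequence in the enumeration, and that the maximality hypothesis then merely reconfirms the same path at $D_5$ and $A_4$ rather than admitting additional candidates. Once Fig.~\ref{3gradings} and the list of local sequences are in hand, every step is a finite case check.
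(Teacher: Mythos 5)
Your proposal is correct and follows essentially the same route as the paper's proof: unextendability forces the sequence to begin at $E_7$, the enumeration of local sequences then pins down \eqref{mainsequence}, and maximality is confirmed by a finite check of arrow weights. You simply make explicit the case checks at $D_5$ ($10 > 8$) and $A_4$ ($6 > 4$) that the paper leaves implicit.
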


\begin{proof}
    Any sequence that cannot be embedded in a longer sequence begins with $E_7$. The only local sequence beginning with $E_7$ is the sequence \eqref{mainsequence}. Likewise, the only maximal sequence beginning with $E_7$ is the sequence \eqref{mainsequence}.
\end{proof}

\begin{remark}
The minuscule coweights of $E_7$ span the unique system of $28$ equiangular lines in $\mathbb{R}^7$ that attain the absolute bound $\binom{7+1}{2}$ described earlier. 
By \textbf{accute minuscule coweights} we mean a set of minuscule coweights with positive pairwise inner product. 
Recall that $\overline{E}_6$ is constructed by taking the lines of $\overline{E}_7$ orthogonal to a single member of the $28$ equiangular lines.
Likewise, $\overline{D}_5$, $\overline{A}_4$, and $\overline{A_1 \times A_2}$ are constructed as the lines of $\overline{E}_7$ orthogonal to a pair, triple, and quadruple of accute minuscule coweights, and the subset of the $28$ equiangular lines they span. 
So we can also understand the sequence \eqref{mainsequence} by taking roots orthogonal to successively larger sets of accute minuscule coweights of $E_7$.
\end{remark}

\section{Lie Algebras of Star-Closed Line Systems}

Certain important Lie and Jordan structures correspond to star-closed line systems and binary decompositions. Indeed, all Jordan triple systems are constructed from 3-gradings on root systems, or equivalently from binary decompositions on line systems.
In what follows we focus on Lie algebras, given their direct application to particle physics. Even so, many of the structures described below could be constructed using the Jordan triple systems corresponding to the $3$-graded Lie algebra in question.

A \textbf{Lie algebra} is a vector space $\mathfrak{g}$ with product $[x,y]$ such that $[x,x] = 0$ and $[[x,y],z] + [[y,z],x] + [[z,x],y] = 0$ for all vectors $x,y,z$. 
Lie algebras are non-associative in general, and we say that a Lie algebra is \textbf{abelian} when $[x,y]=0$ for all $x,y$.  
We can construct certain important Lie algebras (the semi-simple ones) using root systems, including the simply-laced root systems corresponding to star-closed line systems. 
\begin{theorem}
    \cite[42-43]{carter_simple_1972} Let $\Phi$ be an irreducible root system. Then there exists, up to Lie algebra isomorphism, a simple Lie algebra $\mathfrak{g}$ over $\mathbb{C}$ with a Chevalley basis.  
    \label{rootsandLiealgebras}
\end{theorem}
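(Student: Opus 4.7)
The plan is to carry out the standard Chevalley construction, exhibiting $\mathfrak{g}$ explicitly as a direct sum
\[
\mathfrak{g} = \mathfrak{h} \oplus \bigoplus_{\alpha \in \Phi} \mathbb{C}\, e_\alpha,
\]
where $\mathfrak{h}$ is an abelian Cartan subalgebra of dimension equal to the rank of $\Phi$, and then to verify (i) that the prescribed brackets satisfy the Jacobi identity, (ii) that the resulting algebra is simple, and (iii) that any two such constructions are isomorphic.

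First I would set $\mathfrak{h} := \mathbb{C} \otimes_{\mathbb{Z}} Q^{\vee}$, where $Q^{\vee}$ is the coroot lattice, so that each root $\alpha \in \Phi$ has a coroot $h_\alpha \in \mathfrak{h}$, and declare $\mathfrak{h}$ abelian. For each $\alpha \in \Phi$ I adjoin a one-dimensional root space $\mathbb{C} e_\alpha$ with defining brackets
\[
[h,e_\alpha] = \alpha(h)\, e_\alpha, \qquad [e_\alpha,e_{-\alpha}] = h_\alpha,
\]
together with $[e_\alpha,e_\beta] = N_{\alpha,\beta}\, e_{\alpha+\beta}$ when $\alpha+\beta \in \Phi$ and $[e_\alpha,e_\beta] = 0$ when $\alpha+\beta$ is neither a root nor zero. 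This is already enough to specify the vector-space structure and most of the bracket; the substance of the theorem is hiding entirely in the constants $N_{\alpha,\beta}$.

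The main obstacle, and the reason the construction is more than a formal definition, is choosing the integer structure constants $N_{\alpha,\beta}$ consistently so that the Jacobi identity holds for every triple $(\alpha,\beta,\gamma)$. The strategy is to fix a total ordering of the simple roots, single out the \emph{extraspecial pairs} $(\alpha,\beta)$ (those in which $\alpha$ is the smallest positive root with $\beta-\alpha \in \Phi$), prescribe $N_{\alpha,\beta} = \pm(p+1)$ for such a pair, where $p$ is the largest integer with $\beta - p\alpha \in \Phi$, and then to propagate these choices by induction on the height of $\alpha+\beta$. The Jacobi identity on the triples $(\alpha,\beta,\gamma)$ with $\alpha+\beta+\gamma = 0$ or with $\alpha+\beta+\gamma \in \Phi$ becomes a system of equations that is provably consistent once the extraspecial signs are fixed, and forces every remaining $N_{\alpha,\beta}$ uniquely. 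This bookkeeping is the bulk of the argument and is precisely the content of Carter's calculation in \cite[42-43]{carter_simple_1972}.

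With Jacobi established, simplicity follows quickly. Any nonzero ideal $\mathfrak{a}$ is $\mathfrak{h}$-stable, hence a sum of root spaces plus a subspace of $\mathfrak{h}$; irreducibility of $\Phi$ together with the transitivity supplied by the operators $[e_\alpha,\cdot]$ acting along root strings shows that once $\mathfrak{a}$ contains a single $e_\alpha$ it must contain every $e_\beta$, after which $[e_\alpha,e_{-\alpha}] = h_\alpha$ forces $\mathfrak{a} = \mathfrak{g}$. Uniqueness up to isomorphism reduces to the observation that any two admissible systems $(N_{\alpha,\beta})$ differ by a rescaling $e_\alpha \mapsto \varepsilon_\alpha e_\alpha$ with $\varepsilon_\alpha \in \{\pm 1\}$ satisfying a cocycle condition, and every such rescaling is realized by a diagonal Lie-algebra automorphism; I would cite this classical fact rather than reprove it.
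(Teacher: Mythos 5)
The paper does not actually prove this statement: it is imported wholesale as a citation to Carter, and everything after the theorem statement is exposition of the resulting structure (Cartan grading, root spaces, the bracket relations), not an argument. So the relevant comparison is between your sketch and Carter's own route. Your outline is a recognizable and essentially correct summary of the Chevalley-basis story, but it takes a genuinely different path from Carter in one respect: Carter first establishes the \emph{existence} of the simple Lie algebra attached to $\Phi$ (via the classification and the existence/isomorphism theorems on the cited pages) and only afterwards shows that inside that algebra one can choose a basis whose structure constants are the integers $\pm(p+1)$, with signs pinned down by extraspecial pairs. You instead propose to build the algebra from scratch by writing down the multiplication table and verifying Jacobi directly. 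That is a legitimate alternative (it is closer to Chevalley's original construction, or to the sign-cocycle construction on the root lattice in the simply-laced case), but it front-loads all the difficulty into the step you dismiss as ``provably consistent'': proving that the propagated constants satisfy Jacobi for every triple is the entire theorem under that approach, and asserting it is not a proof. Two smaller points: your definition of an extraspecial pair is garbled --- for a fixed sum $\gamma=\alpha+\beta$ the extraspecial pair is the one with $\alpha$ minimal such that $\gamma-\alpha$ is a positive root, not ``$\beta-\alpha\in\Phi$'' --- and the uniqueness claim needs the isomorphism theorem (that the root system determines the algebra), not merely the observation that admissible sign systems differ by a diagonal rescaling. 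As a sketch that ultimately defers to Carter for the bookkeeping, it is serviceable; as a standalone proof it has a hole exactly where the work is.
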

That is, given root system $\Phi$, there is a $\Phi$-graded Lie algebra of the form,
\begin{align*}
    \mathfrak{g} = \mathfrak{h} \oplus \bigoplus_{r \in \Phi} \mathfrak{g}_r.
\end{align*}
This is called the \textbf{Cartan grading} of Lie algebra $\mathfrak{g}$.
Here  $\mathfrak{h}$ is a \textbf{Cartan subalgebra} of $\mathfrak{g}$ and $\mathfrak{g}_r$ are the \textbf{root spaces} of the decomposition. 
The dimension of $\mathfrak{h}$ is equal to the dimension of the space $\mathbb{R}^n$ spanned by the roots $\Phi$, whereas the dimension of each root space  $\mathfrak{g}_r$ is $1$. The \textbf{rank} of the Lie algebra is the dimension of $\mathfrak{h}$. 
The Cartan subalgebra $\mathfrak{h}$ has basis $h_r$, where $r$ is each simple root of $\Phi$ (corresponding to the vertices of the Coxeter-Dynkin diagram of $\Phi$). 
Each subalgebra $\mathfrak{g}_r$ is spanned by basis vector $e_r$, where $r$ is a root in $\Phi$. 
For any $x$ not in $\Phi$ we have $\mathfrak{g}_x = 0$. 
The products involving the Cartan subalgebra $\mathfrak{h}$ are defined entirely in terms of the geometry of the roots $r,s$ in $\Phi$:
\begin{align*}
    [h_r, h_s] = 0, &&
    [h_r, e_s] = \frac{2 \langle r, s \rangle}{\langle r, r\rangle} e_s, &&
    [e_r, e_{-r}] = h_r. 
\end{align*}
Here $\langle r, s\rangle$ denotes the standard Euclidean inner product between vectors $r,s$ in $\mathbb{R}^n$ (where $\mathrm{dim}_\mathbb{C} (\mathfrak{h}) = n$).
Products of the root spaces of two linearly independent roots are defined by,
\begin{align*}
    [e_r, e_s] = N_{r,s} e_{r+s}.
\end{align*}
The structure constants $N_{r,s}$ can be fixed without loss of generality to define the Chevalley basis, as described in \cite[56-57]{carter_simple_1972}.
Theorem \ref{rootsandLiealgebras} applies to all irreducible root systems. 
In what follows we only make use of the cases involving simply-laced root systems, which are listed in Fig. \ref{simpleLieAlgebras} \cite[43]{carter_simple_1972}. 
\begin{figure}[!ht]
    \centering
    \begin{tabular}{c c c c c c}
    \hline\hline
    \text{Type} & $\mathfrak{g}$ & $\mathrm{dim~}\mathfrak{g}$ & $\mathrm{rank~}\mathfrak{g}$ & $\lvert\Phi\rvert$ &  \text{Dynkin diagram} \\
    \hline 
    $A_n~(n \ge 1)$ & $\mathfrak{sl}_{n+1}$ &   $n(n+2)$ & $n$ & $n(n+1)$ & $\dynkin{A}{}$ \\
    $D_n~(n \ge 4)$ & $\mathfrak{so}_{2n}$ &  $n(2n-1)$ & $n$ &     $2 n(n-1)$         & $\dynkin{D}{}$ \\
    $E_6$ & $\mathfrak{e}_6$ &  $78$ & $6$ &  $72$              & $\dynkin{E}{6}$\\
    $E_7$ & $\mathfrak{e}_7$ &  $133$  & $7$ &  $126$      & $\dynkin{E}{7}$\\
    $E_8$ & $\mathfrak{e}_8$ &  $248$  & $8$ &  $240$  & $\dynkin{E}{8}$\\
    \hline\hline
  \end{tabular}
  \caption{The Lie algebras of simply-laced root systems.}
  \label{simpleLieAlgebras}
\end{figure}

Suppose that indecomposable star-closed line system $\overline{\Phi}$ admits a binary grading, $\overline{\Phi} = \overline{\Phi}_{{1}} ~\dot{\cup}~\overline{\Phi}_{{0}}$. 
The lines of the star-free component $\overline{\Phi}_1$ can be spanned by roots with non-negative inner products. 
We denote these spanning roots by $\Phi_1$ and define $\Phi_{-1} = - \Phi_1$ as the set of opposite roots, which also has all non-negative inner products. 
Then we have the following $3$-grading on $\mathfrak{g}$ as a coarsening of the Cartan grading:
\begin{align*}
    \mathfrak{g} = \left( \bigoplus_{r \in \Phi_{-1}} \mathfrak{g}_r\right) \oplus \left(\mathfrak{h} \oplus \bigoplus_{r \in \Phi_0} \mathfrak{g}_r\right)
    \oplus 
    \left( \bigoplus_{r \in \Phi_{1}} \mathfrak{g}_r\right) = \mathfrak{g}(-1) \oplus \mathfrak{g}(0) \oplus \mathfrak{g}(1).
\end{align*}
That is,
\begin{align*}
    [\mathfrak{g}(i),\mathfrak{g}(j)] \subseteq \mathfrak{g}(i+j). 
\end{align*}
We see, then, that a $3$-grading on a root system $\Phi \xrightarrow{n} \Phi_0$ defines abelian Lie subalgebras $\mathfrak{g}(-1)$ and $\mathfrak{g}(1)$ of dimension $n = |\Phi_{-1}| = |\Phi_{1}| = |\overline{\Phi}_1|$. 
The $\mathfrak{g}(0)$ Lie subalgebra acts on each of these abelian Lie subalgebras via $[\mathfrak{g}(0), \mathfrak{g}(\pm 1)] \subseteq \mathfrak{g}(\pm 1)$.
Also, since the entire Cartan subalgebra $\mathfrak{h}$ is contained in $\mathfrak{g}(0)$, we see that $\mathfrak{g}(0)$ is not isomorphic to the Lie algebra constructed from root system $\Phi_0$, but rather is the direct product of this algebra and the one-dimensional abelian Lie algebra:
\begin{align*}
    \mathfrak{g}(0) = \mathbb{C} \oplus [\mathfrak{g}(0), \mathfrak{g}(0)].
\end{align*}
That is, $\mathfrak{g}(0)$ contains $\mathfrak{h}$, the Cartan subalgebra of $\mathfrak{g}$. But $[\mathfrak{g}(0), \mathfrak{g}(0)]$ does not contain $\mathfrak{h}$. The Cartan subalgebra of $[\mathfrak{g}(0), \mathfrak{g}(0)]$ is a subalgebra of $\mathfrak{h}$ with one dimension less than $\mathfrak{h}$.

In particular, the binary decomposition $A_4 \rightarrow A_1 \times A_2$ signifies the following Lie algebra $3$-grading:
\begin{align*}
    \mathfrak{sl}_5 = \mathfrak{sl}_5(-1) \oplus \mathfrak{sl}_5(0) \oplus \mathfrak{sl}_5(1),
\end{align*}
where $\mathfrak{sl}_5(1)$ is six-dimensional and $[\mathfrak{sl}_5(0),\mathfrak{sl}_5(0)] = \mathfrak{sl}_2 \oplus \mathfrak{sl}_3$.
This means that the $0$-piece of this $3$-grading is,
\begin{align*}
    \mathfrak{sl}_5(0) = \mathbb{C} \oplus \mathfrak{sl}_2 \oplus \mathfrak{sl}_3.
\end{align*}
This Lie algebra---the $0$-piece of the 3-grading due to the $A_4 \rightarrow A_1\times A_2$ binary decomposition---is in fact the Lie algebra of the standard model of particle physics.

\section{Connection to the Standard Model}

The exceptional sequence \eqref{mainsequence} corresponds to the following sequence of nested Lie algebra $3$-gradings:
\begin{align*}
    \mathfrak{e}_7 \xrightarrow{27} 
    \mathfrak{e}_6 \xrightarrow{16}
    \mathfrak{so}_{10} \xrightarrow{10} 
    \mathfrak{sl}_{5} \xrightarrow{6} 
    \mathfrak{sl}_2 \oplus \mathfrak{sl}_3.
\end{align*}
The final arrow, $A_4 \rightarrow A_1 \times A_2$, corresponding to the diagram $\dynkin{A}{**o*}$, yields the Lie algebra of the standard model of particle physics as the $0$-piece of the 3-grading.
\begin{align*}
    \mathfrak{g}_{SM} = \mathbb{C} \oplus \mathfrak{sl}_2 \oplus \mathfrak{sl}_3.
\end{align*}
Our next step is to determine the action of $\mathfrak{g}_{SM}$ on the rest of $\mathfrak{e}_7$, so that we can identify certain root spaces with familiar standard model particles.

Each root in $E_7$ indexes a one-dimensional root space $\mathfrak{g}_r$, spanned by vector $e_r$, in the Lie algebra $\mathfrak{e}_7$ described above.
By construction, each $e_r$ is an eigenvector of each $h$ in the Cartan subalgebra $\mathfrak{h}$, since we have $[h_s, e_r] = 2 \langle s, r\rangle /\langle s, s\rangle e_r = \langle s, r \rangle e_r$.
Recall that (in the Chevalley basis) $\mathfrak{g}_{SM}$ contains the Cartan subalgebra of $\mathfrak{sl}_5$, which has dimension $4$ and is itself a subalgebra of $\mathfrak{h}$, the Cartan subalgebra of $E_7$. 
In order to find the correspondence between root spaces $\mathfrak{g}_r$ and particles, we need to find a well-chosen basis of $\mathfrak{h} \cap \mathfrak{g}_{SM}$ (the Cartan subalgebra of $\mathfrak{sl}_5$).
The four simultaneous eigenvalues with respect to this basis give us the familiar \textit{hypercharge}, \textit{isospin}, and \textit{colour} of each particle (where colour signifies a pair of eigenvalues). 
Since $\mathfrak{h}$ is seven dimensional, there are three possible remaining simultaneous eigenvalues. 
We can use two of these to assign a \textit{generation} to each root space $\mathfrak{g}_r$ and the remaining eigenvalue to distinguish particles of the standard model from additional particles.

For specificity, we will denote the exceptional sequence \eqref{mainsequence} in terms of Coxeter-Dynkin diagrams as follows:
\begin{align*}
    \dynkin{E}{*******} \xrightarrow{} 
    \dynkin{E}{******o} \xrightarrow{} 
    \dynkin{E}{*****oo} \xrightarrow{} 
    \dynkin{E}{****ooo} \xrightarrow{} 
    \dynkin{E}{***oooo} 
\end{align*}
We may write vectors in the Cartan subalgebra $\mathfrak{h} \subset \mathfrak{e}_7$ using Dynkin diagrams, e.g.:
\begin{align*}
    \dynkin[labels = {a_1, a_2, a_3, a_4, a_5, a_6, a_7}]{E}{7} = \sum_{i = 1}^7 a_i h_{s_i} \in \mathfrak{h},
\end{align*}
where $s_i$ are a set of simple roots of $E_7$. 
This means that we compute the eigenvalues of the action of a vector in $\mathfrak{h}$ on a root space as follows:
\begin{align*}
    \left[ \dynkin[labels = {a_1, a_2, a_3, a_4, a_5, a_6, a_7}]{E}{7} ,\mathfrak{g}_r\right] = \left(\sum_{i = 1}^7 a_i (s_i \cdot r) \right) \mathfrak{g}_r.
\end{align*}

We define the \textbf{isospin} of each root space $\mathfrak{g}_r$ as its eigenvalue for multiplication by the following vector in $\mathfrak{h}\cap \mathfrak{g}_
{SM}$:
\begin{align*}
    W_0 &= \dynkin[labels = {0,\frac{1}{2},0,0,0, 0,0}]{E}{7}.
\end{align*}
The vector $2 W_0$ is a coweight of $E_7$ (also a coroot) and defines an isospin $5$-grading on $E_7$. Root spaces with isospin $0$ correspond to \textbf{right-handed} particles (left-handed anti-particles). Root spaces with isospin $\pm \frac{1}{2}$ correspond to \textbf{left-handed} particles (right-handed anti-particles). The unique root spaces with isospins $\pm 1$ correspond to the $W^{\pm}$ bosons. Specifically, $W_0$ and $W^\pm$ span the $\mathfrak{sl}_2$ (i.e., $A_1$), component of  the standard model Lie algebra $\mathfrak{g}_{SM}$.

We define the \textbf{colour} of each root space as the pair of eigenvalues of the following vectors in $\mathfrak{h}\cap \mathfrak{g}_{SM}$: 
\begin{align*}
    \lambda_3 = \dynkin[labels = {1, 0, 0, 0, 0, 0, 0}]{E}{7},
    &&
    \sqrt{3}\lambda_8 = \dynkin[labels = {1, 0, 2, 0, 0, 0, 0}]{E}{7}.
\end{align*}
The vectors $\lambda_3$, $\sqrt{3}\lambda_8$ in $\mathfrak{h}$ as well as the six unique root spaces $\mathfrak{g}_r$ with eigenvalues $\pm (2,0)$, $\pm (-1,3)$, $\pm (-1,-3)$ form the $\mathfrak{sl}_3$ (i.e., $A_2$) component of the standard model Lie algebra $\mathfrak{g}_{SM}$. These eight-dimensions of $\mathfrak{e}_7$ represent the eight \textbf{gluons}, the bosons responsible for the strong force. 
The corresponding $A_2$ root system defines a star-decomposition of $E_7$ that allows us to assign particle colour.
Specifically, we will call \textbf{blue} the fifteen root spaces with $\lambda_3, \sqrt{3}\lambda_8$ eigenvalues $(0,2)$, their opposite root spaces are called \textbf{anti-blue}.
Likewise, eigenvalues $(-1,-1)$ signify \textbf{red} and eigenvalues $(1,-1)$ signifies \textbf{green}. The opposite eigenvalues signify \textbf{anti-red} and \textbf{anti-green}.
Finally, the $30$ root spaces with eigenvalues $(0,0)$ are called \textbf{colourless}. 
Root spaces outside of $\mathfrak{g}_{SM}$ that are red, green, or blue correspond to \textbf{quarks} whereas those that are colourless correspond to \textbf{leptons}. 

We define the \textbf{hypercharge} of each root space as the eigenvalue of the following operator:
\begin{align*}
    B &= \dynkin[labels = {\frac{2}{3}, 1, \frac{4}{3}, 2, 0, 0, 0}]{E}{7}.
\end{align*}
The 3-grading defined by $\dynkin{E}{****ooo} \xrightarrow{} \dynkin{E}{***oooo}$ also defines a unique line perpendicular to the $\mathbb{R}^3$ of $A_1\times A_2$ (spanned by $W_0$, $\lambda_3$, $\lambda_8$) but also within the $\mathbb{R}^4$ spanned by the coroots of $A_4$. 
This unique line is spanned by the hypercharge operator $B$.
Although $B$ is not a coweight, there is a $13$-grading defined by coweight $3B$. 
This means that the eigenvalues of $B$ are in the set $\left\{0, \pm \frac{1}{3}, \pm \frac{2}{3}, \pm 1, \pm \frac{4}{3}, \pm \frac{5}{3}, \pm 2\right\}$.
All of these values correspond to known physical particles except for $\pm \frac{5}{3}$, which are the eigenvalues of the root spaces of the roots in $A_4 \setminus (A_1\times A_2)$.

\begin{figure}
    \centering
    \begin{align*}
\def\arraystretch{1.2}
\begin{array}{lcrrrr}%
\hline \hline
\text{Name} & \text{Symbol} & B & W_0 & \lambda_3 & \sqrt{3}\lambda_8 \\
\hline 
\text{Right-handed neutrino} & \nu_R & 0 & 0 & 0 & 0 \\
\hline
\text{Right-handed electron} & e^{-}_R & -2 & 0 & 0 & 0 \\
\hline
\text{Right-handed red up quark} & u_R^{r} & \frac{4}{3} & 0 & -1 & -1 \\
\text{Right-handed green up quark} & u_R^{g} & \frac{4}{3} & 0 & 1 & -1 \\
\text{Right-handed blue up quark} & u_R^{b} & \frac{4}{3} & 0 & 0 & 2 \\
\hline
\text{Right-handed red down quark} & d_R^{r} & -\frac{2}{3} & 0 & -1 & -1 \\
\text{Right-handed green down quark} & d_R^{g} & -\frac{2}{3} & 0 & 1 & -1 \\
\text{Right-handed blue down quark} & d_R^{b} & -\frac{2}{3} & 0 & 0 & 2 \\
\hline
\text{Left-handed neutrino} & \nu_L & -1 & \frac{1}{2} & 0 & 0 \\
\text{Left-handed electron} & e^{-}_L & -1 & -\frac{1}{2} & 0 & 0 \\
\hline
\text{Left-handed red up quark} & u_L^{r} & \frac{1}{3} & \frac{1}{2} & -1 & -1 \\
\text{Left-handed green up quark} & u_L^{g} & \frac{1}{3} & \frac{1}{2} & 1 & -1 \\
\text{Left-handed blue up quark} & u_L^{b} & \frac{1}{3} & \frac{1}{2} & 0 & 2 \\
\text{Left-handed red down quark} & d_L^{r} & \frac{1}{3} & -\frac{1}{2} & -1 & -1 \\
\text{Left-handed green down quark} & d_L^{g} & \frac{1}{3} & -\frac{1}{2} & 1 & -1 \\
\text{Left-handed blue down quark} & d_L^{b} & \frac{1}{3} & -\frac{1}{2} & 0 & 2 \\
\hline
\hline
\end{array}
\end{align*}
\caption{Fermion particle nomenclature.}
    \label{particlenomenclature}
\end{figure}

Using the four simultaneous eigenvalues of $B, W_0, \lambda_3, \lambda_8$, we can assign a standard particle name to each of the root spaces $\mathfrak{g}_r$ with roots in $E_7 \setminus A_4$, as shown in Fig. \ref{particlenomenclature}. Here we label particles according to the eigenvalues for hypercharge and isospin given in \cite{baez_algebra_2010}, while the three colour labels (red, green, blue) are treated as conventional. 

\begin{remark}
    \textbf{Anti-particles} correspond to roots with opposite eigenvalues of the partner particle. 
    Just as each root describes a particle or anti-particle, each line in the corresponding line system describes a particle/anti-particle pair. 
    Whether we choose to work with Lie structures (roots) or Jordan structures (lines) largely corresponds to whether we choose to work with particles or with particle/anti-particle pairs.
\end{remark}

The next task is to sort the particles into generations, and to identify any additional particles beyond those given in the standard model. To do so, we note that the Lie centralizer of the standard model Lie algebra in $\mathfrak{e}_7$ has the form,
\begin{align*}
    C_{\mathfrak{e}_7}(\mathfrak{g}_{SM}) = \mathbb{C}^2 \oplus \mathfrak{sl}_3.
\end{align*}
The $\mathfrak{sl}_3$ component is generated by the root spaces $\mathfrak{g}_r$ corresponding to the unique six roots in $E_7$ perpendicular to each root in $A_4$. 
These six root are unique in $E_7$ in that their root spaces have null hypercharge, isospin, and are colourless. 
For this reason, we call them \textbf{right-handed neutrinos} (and left-handed anti-neutrinos)---the undetectable partners to left-handed neutrinos (and right-handed anti-neutrinos). 
These six $\mathfrak{g}_r$ root spaces in the centralizer of $\mathfrak{g}_{SM}$ serve the same role as the six coloured gluons in $\mathfrak{g}_{SM}$. 
Just as the six coloured gluons define the star-decomposition of $E_7$ that gives us particle colour, the three right-handed neutrinos and their anti-particles can be used to define a second star-decomposition of $E_7$ that gives us particle \textbf{generation}. 
We assign particle generation to each root space $\mathfrak{g}_r$ using the eigenvalue pair of the following two operators:
\begin{align*}
    \rho_3 = \dynkin[labels = {0, 0, 0, 0, 0, 1, 0}]{E}{7},
    &&
    \sqrt{3}\rho_8 = \dynkin[labels = {0, 0, 0, 0, 0, 1, 2}]{E}{7}.
\end{align*}
Specifically, we may call the thirty root spaces with $\rho_3, \sqrt{3}\rho_8$ eigenvalues $\pm (0,2)$  the \textbf{first generation}, the thirty with eigenvalues $\pm (1,1)$ the \textbf{second generation}, and the thirty with eigenvalues $\pm (1,-1)$ the \textbf{third generation}.
Each generation consists of fifteen particles with the eigenvalues given in Fig. \ref{particlenomenclature} and the corresponding fifteen anti-particles.
Any root spaces with eigenvalues $(0,0)$ do not belong to any generation. 
These include the boson root spaces of $\mathfrak{g}_{SM}$ and $22$ additional root spaces. 

So far we have defined an orthogonal basis $\{\rho_3, \rho_8, B, W_0, \lambda_3, \lambda_8\}$ for a $\mathbb{C}^6$ subspace of $\mathfrak{h}$, and can use the simultaneous eigenvalues of this basis to partition $\mathfrak{e}_7$ into the familiar standard model bosons $\mathfrak{g}_{SM}$, a right-handed neutrino $\mathfrak{sl}_3$, three generations of fifteen particles and their anti-particles, plus $22$ additional root spaces and one remaining dimension of $\mathfrak{h}$ perpendicular to this $\mathbb{C}^6$.
We can use this remaining dimension to distinguish familiar particles from potentially new and unobserved ones. 
That is, we define a seventh vector in $\mathfrak{h}$ perpendicular to $\mathbb{C}^6$:
\begin{align*}
    H &= \dynkin[labels = {1, \frac{3}{2}, 2, 3, \frac{5}{2}, \frac{5}{3}, \frac{5}{6}}]{E}{7}
\end{align*}
The vector $3H$ is a coweight of $E_7$ and defines a $7$-grading, so the eigenvalues of $H$ are in the set $\left\{0, \pm \frac{1}{3}, \pm \frac{2}{3}, \pm 1\right\}$.

It turns out that the three generations of particles are precisely the root spaces $\mathfrak{g}_r$ with $H$ eigenvalues $\pm \frac{1}{3}$ and $\pm \frac{2}{3}$. 
Furthermore the particles with $H$ eigenvalue $0$ consist of the bosons of $\mathfrak{g}_{SM}$, the right-handed neutrino $\mathfrak{sl}_3$, and the particles with hypercharge $\pm \frac{5}{3}$ (corresponding to root spaces $\mathfrak{g}_r$ with $r$ in $A_4 \setminus A_1\times A_2$). 

To summarize, we can trim out the unobserved particles of $\mathfrak{e}_7$ by making $\pm 1$ a forbidden eigenvalue of $H$ and $\pm \frac{5}{3}$ a forbidden eigenvalue of $B$. All other root spaces correspond to familiar bosons and the three generations of fermions. 

\begin{remark}
    The fifteen particle/anti-particle pairs of a single generation correspond to a generalized quadrangle structure in the following way. 
    If we take the corresponding $15$ roots in $E_7$, then these span a star-free line system representing the unique graph $\mathrm{srg}(15,8,4,4)$. 
    This graph has precisely $15$ maximal independent sets, all of size $3$, representing triads of orthogonal lines. These triads serve as the ``lines'' of a generalized quadrangle $GQ(2,2)$. 
    In terms of the nomenclature given in Fig. \ref{particlenomenclature}, the particle triads are:
    \begin{align*}
        \begin{matrix}
            \{ \nu_L, u_L^r, u_R^r  \}, &
            \{ \nu_L, u_L^g, u_R^g \}, &
            \{ \nu_L, u_L^b, u_R^b \}, \\
            \{ e_L^{-}, d_L^r, u_R^r \}, &
            \{ e_L^{-}, d_L^g, u_R^g \}, &
            \{ e_L^{-}, d_L^b, u_R^b \}, \\
            \{ e_R^{-}, u_R^r, d_R^r \}, &
            \{ e_R^{-}, u_R^g, d_R^g \}, &
            \{ e_R^{-}, u_R^b, d_R^b \}, \\
            \{ u_L^r, d_L^g, d_R^b  \}, &
            \{ u_L^r, d_L^b, d_R^g  \}, &
            \{ u_L^g, d_L^r, d_R^b  \}, \\
            \{ u_L^b, d_L^r, d_R^g \}, &
            \{ u_L^g, d_L^b, d_R^r \}, &
            \{ u_L^b, d_L^g, d_R^r  \}. \\
        \end{matrix}
    \end{align*}  
  Of these fifteen triads, six have the property that they do not contain a lepton. These six are also the only six where the eigenvalues of $B, W_0, \lambda_3, \lambda_8$ each add to zero over the triad:
  \begin{align*}
      \begin{matrix}
        \{ u_L^r, d_L^g, d_R^b  \}, & 
        \{ u_L^r, d_L^b, d_R^g  \}, &
        \{ u_L^g, d_L^r, d_R^b  \}, \\ 
        \{ u_L^b, d_L^r, d_R^g \}, &
        \{ u_L^g, d_L^b, d_R^r \}, &
        \{ u_L^b, d_L^g, d_R^r  \}.
      \end{matrix}
  \end{align*}
  In fact, this subset of six triads forms a smaller generalized quadrangle $GQ(2,1)$ on nine points. 
  The roots corresponding to these particle root spaces have the following interesting property. The roots of the $GQ(2,1)$ particles all have non-negative inner product, as do the roots of the $GQ(2,2)\setminus GQ(2,1)$ particles. However, the inner products between a root from each of the two sets is always non-positive. 
  The fact that a generation of $15$ particles does not correspond to a set of roots with all non-negative inner products, but rather describes an embedding of $G(2,1)$ within $G(2,2)$, leaves a tempting combinatorial clue regarding abundance of matter and the dearth of antimatter in the physical universe. 
\end{remark}

\section{Discussion}

This note does not attempt to account for the Higgs mechanism, the embedding of electromagnetism within the electroweak force, or particle spin. Neither does it speculate on a role for the $22$ additional root-spaces within $\mathfrak{e}_7$ that do not correspond to familiar particles of the standard model. 
Rather, this note attempts to convert certain questions about the accidental properties of particle physics into corresponding questions about exceptional mathematical objects. 
To the question of why we have this particular standard model Lie algebra $\mathfrak{g}_{SM}$ and not another, perhaps we could answer that this is the Lie algebra in which the exceptional sequence terminates. To the question of why there are three generations of fifteen (or sixteen) particles that represent this Lie algebra, perhaps we could answer that the exceptional sequence defines an action of $\mathfrak{g}_{SM}$ on $\mathfrak{e}_7$ and that star-decomposition explains the existence of three generations. 
Most remarkably, questions about physical symmetries and structures can perhaps be answered in terms of systems of equiangular lines at the absolute bound, beginning with 3 line stars and the 28 lines spanned by the minuscule coweights of $E_7$.

\bibliographystyle{amsalpha}
\bibliography{references}

\appendix

\end{document}